\theoremstyle{plain}
\newtheorem{theorem}{Theorem}
\newtheorem*{maintheorem}{Theorem}
\newtheorem{lemma}[theorem]{Lemma}
\theoremstyle{definition}
\newtheorem{definition}[theorem]{Definition}
\newtheorem{remark}[theorem]{Remark}
\newtheorem{example}[theorem]{Example}
\let\er\R
\newcommand{\Q}{\mathbb{Q}}\let\kve\Q
\newcommand{\Z}{\mathbb{Z}}
\renewcommand{\O}{\mathcal{O}}
\DeclareMathOperator{\Tr}{Tr}\let\tr\Tr
\let\nm\Nm
\newcommand{\OK}{\mathcal O_K}
\let\epsilon\varepsilon\let\phi\varphi
\def\uv#1{``#1''}
\let\injto\hookrightarrow
\title{No proper generalized quadratic forms are universal over quadratic fields}
\author[]{Ond\v{r}ej Chwiedziuk}
\author[]{Mat\v{e}j Dole\v{z}\'{a}lek}
\author[]{Simona Hlavinkov\'a}
\author[]{Emma P\v{e}chou\v{c}kov\'{a}}
\author[]{Zden\v{e}k Pezlar}
\author[]{Om Prakash}
\author[]{Anna R\r{u}\v{z}i\v{c}kov\'{a}}
\author[]{Mikul\'{a}\v{s} Zindulka}
\address{Charles University, Faculty of Mathematics and Physics, Department of Algebra,
Sokolovsk\'{a} 83, 186 75 Praha 8, Czech Republic}
\email{ondrachwiedziuk@gmail.com}
\email{matej@gimli.ms.mff.cuni.cz}
\email{simonkahlavinkova@gmail.com}
\email{emma.pechouckova@gmail.com}
\email{zdendapezlar@seznam.cz}
\email{prakash@karlin.mff.cuni.cz}
\email{anna.ruzickova.13@gmail.com}
\email{mikulas.zindulka@matfyz.cuni.cz}
\subjclass[2020]{11E12, 11E20, 11E25, 11R11, 11R27, 11R80}
\keywords{Generalized quadratic form, universal quadratic form, real quadratic field, positive definite quadratic form}
\thanks{
We acknowledge support by Czech Science Foundation (GACR) grant 21-00420M (O.P., M.Z.), Charles University programme PRIMUS/24/SCI/010 (O.P., M.Z.), Charles University Research Centre programme UNCE/24/SCI/022 (O.P.) and Charles University project GAUK No. 134824 (M.D., E.P., M.Z.); O.C., S.H., E.P., Z.P and A.R. were also supported by student faculty grants of the Faculty of Mathematics and Physics of Charles University.
}
\begin{document}

\begin{abstract}
We consider generalized quadratic forms over real quadratic number fields and prove, under a natural positive-definiteness condition, that a generalized quadratic form can only be universal if it contains a quadratic subform that is universal. We also construct an example illustrating that the positive-definiteness condition is necessary. 
\end{abstract}

\maketitle

\section{Introduction}

The study of quadratic forms furnishes much of the development and history of number theory. The focus is usually on identifying which numbers a given quadratic form represents, or more particularly on finding or characterizing forms which represent every (positive) integer -- such forms are called \emph{universal}. One of the early examples of a universal quadratic form over the integers was Lagrange's four-square theorem, which states that any positive integer may be expressed as $x^2+y^2+z^2+w^2$ for some integers $x$, $y$, $z$, $w$. Much later, all universal positive definite quadratic forms over the rational integers were satisfyingly characterized by the $290$ theorem of Bhargava and Hanke \cite{bhargava-hanke}, which states that a positive definite quadratic form over the integers is universal if and only if it represents each of the $29$ so-called \emph{critical integers}
\[\begin{gathered}
1,\ 2,\ 3,\ 5,\ 6,\ 7,\ 10,\ 13,\ 14,\ 15,\ 17,\ 19,\ 21,\ 22,\ 23,\ 26,\\
29,\ 30,\ 31,\ 34,\ 35,\ 37,\ 42,\ 58,\ 93,\ 110,\ 145,\ 203,\ 290.
\end{gathered}\]
This allowed for a full enumeration of all universal forms in four variables -- there are exactly 6436 of them.

While this effectively answered many of the questions concerning quadratic forms over the integers, the scope had been broadened to study quadratic forms over numbers fields as well at this point. Usually, this is restricted to totally real number fields $K$. In these, an elements is said to be \emph{totally positive} if it remains positive in each of the embeddings $K\injto \er$. A quadratic form is said to be \emph{totally positive definite}, if it only takes totally positive values away from the origin, and subsequently it is \emph{universal}, if it represents every totally positive element of the ring of integers $\OK$ of $K$. Famous examples in this area include Siegel's result that the only totally real number fields where some sum of squares is universal are $\kve$ with $x^2+y^2+z^2+w^2$ and $\kve(\sqrt5)$ with $x^2+y^2+z^2$ \cite{siegel}, remarkably requiring fewer variables than over $\kve$.

Various authors have provided numerous insights into universal forms over number fields, though many more questions remain open. Over quadratic fields $\kve(\sqrt D)$ specifically, several strong results relating minimal ranks and other characteristics of universal forms to continued fraction expansions of $\sqrt D$ have been established by Blomer and Kala \cite{blomer-kala1, blomer-kala2, kala}.
Related to Siegel's result on the sum of three squares in $\kve(\sqrt5)$, Kitaoka's conjecture tantalisingly posits that only finitely many totally real number fields admit a universal quadratic form in three variables, but this is still an open problem. Weaker versions of the conjecture have been proven for quadratic fields by Chan, Kim and Raghavan \cite{chan-kim-raghavan} and for fields of an arbitrary fixed degree by Kala and Yatsyna \cite{kala-yatsyna}.
Overall, the so-called \emph{indecomposable elements} seem to be closely intertwined with universal forms. We refer the reader to a survey paper by Kala for further details \cite{kala-survey}. Recently, Chan and Oh \cite{chan-oh} proved a generalization of the 290 theorem: for every totally real number field, there is a finite \uv{criterion set}  such that a form is universal if and only if it represents all elements of this criterion set.

One possible approach towards gaining insight about universal quadratic forms over number fields is to study representation by a broader class of functions, where one allows the unique non-trivial automorphism $\tau:K\to K$ to be applied to variables. In other words, one studies homogeneous quadratic polynomials not only in some variables $z_1,\dots,z_r$ but also simultaneously in their conjugates $\tau(z_1),\dots,\tau(z_r)$. Any quadratic form is also a generalized quadratic form, so one of the benefits of this approach might be to cast results on quadratic forms into a broader context, to see which notions hold more generally and which appear to be specific to quadratic forms.

These generalized quadratic forms were introduced recently in the context of any Galois extension of $\kve$ by Browning, Pierce, and Schindler \cite{browning-pierce-schindler}, though in this paper, we consider them only over quadratic fields.
They also generalize, albeit only slightly, Hermitian forms, for which Kim, Kim, and Park \cite{kim-kim-park} proved an analogue of the 15 theorem of Conway and Schneeberger, itself a precursor to the 290 theorem of Bhargava and Hanke. 
Recently, we along with Romeo \cite{stunts} investigated those generalized quadratic forms which only take rational values and characterized quadratic fields which admit a binary or ternary generalized form that is universal in the sense of representing all positive integers.

In this paper, we will study generalized quadratic forms over quadratic fields without the restriction that they take only rational values. We will introduce a notion of an \emph{associated quadratic form} $Q$ to a generalized quadratic form $G$, which essentially treats each $z$ and $\tau(z)$ as separate variables while also crucially discarding those that are not used in $G$ (see \Cref{def:associated-quadform}). We will use totally
positive definiteness of $Q$ as a measure of \uv{positive-definiteness} of $G$, and under this notion of positive-definiteness, our main result will be:
\begin{maintheorem}
    Let $ G(z_1, z_2, \dots, z_r) $ be a totally positive definite integral generalized form in $r$ variables over a real quadratic field $K$. If $G$ is universal, then $G$ has a universal quadratic subform.
\end{maintheorem}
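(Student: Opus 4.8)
The plan is to produce an explicit universal quadratic subform of $G$. After a harmless normalization I single out the variables of $G$ that occur \emph{only unconjugated}, set all other variables to zero, and show the resulting quadratic form is universal. The mechanism is that totally positive elements whose second conjugate is very small can be represented by $G$ only by using these unconjugated variables; a unit-scaling argument then propagates representability from such thin elements to all totally positive elements.

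First I would normalize $G$. Replacing $z_i$ by $\tau(z_i)$ for suitable indices $i$ is a bijection of $\OK^r$ and hence preserves universality; discarding variables that do not occur in $G$ changes nothing. Thus I may assume every variable $z_i$ occurs in $G$ either only unconjugated or both conjugated and unconjugated; let $A$, respectively $B$, index these two kinds. Then the associated quadratic form $Q$ of \Cref{def:associated-quadform} is a totally positive definite form in the variables $x_i$ ($i\in A\cup B$) and $y_i$ ($i\in B$), and $G(z)=Q\bigl((z_i)_{i\in A\cup B},(\tau z_i)_{i\in B}\bigr)$. Define $q\bigl((z_i)_{i\in A}\bigr):=G\bigl((z_i)_{i\in A},0\bigr)$, setting $z_i=0$ for $i\in B$. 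Since $\tau(z_i)$ does not occur in $G$ for $i\in A$, this $q$ is an honest integral quadratic form over $\OK$ — a quadratic subform of $G$ — and it is totally positive definite, its Gram matrix being a principal submatrix of that of $Q$.

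The key point is an estimate. Write $\sigma_1=\mathrm{id}$ and $\sigma_2=\tau$ for the two real embeddings of $K$. Applying $\tau$ to $G(z)=Q\bigl((z_i)_{i\in A\cup B},(\tau z_i)_{i\in B}\bigr)$ and then $\sigma_1$, one finds that the real number $\sigma_2\bigl(G(z)\bigr)$ equals the positive definite real quadratic form $\sigma_2(Q)$ (the coefficients of $Q$ being conjugated) evaluated at the real vector whose $x_i$-entry is $\sigma_2(z_i)$ for $i\in A\cup B$ and whose $y_i$-entry is $\sigma_1(z_i)$ for $i\in B$. Hence, with $\lambda>0$ the least eigenvalue of $\sigma_2(Q)$,
\[
\sigma_2\bigl(G(z)\bigr)\ \ge\ \lambda\sum_{i\in B}\bigl(\sigma_1(z_i)^2+\sigma_2(z_i)^2\bigr)\qquad(z\in\OK^r).
\]
Let $\lambda_0>0$ be the least squared length of a nonzero vector of the lattice $\{(\sigma_1\alpha,\sigma_2\alpha):\alpha\in\OK\}$ in $\R^2$, and set $\delta:=\lambda\lambda_0$. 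If $\mu$ is totally positive with $\sigma_2(\mu)<\delta$ and $G(z)=\mu$, the displayed bound forces $z_i=0$ for all $i\in B$, so $\mu=q\bigl((z_i)_{i\in A}\bigr)$. As $G$ is universal, $q$ therefore represents every totally positive $\mu$ with $\sigma_2(\mu)<\delta$.

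Finally I upgrade this to full universality of $q$. Fix a unit $\epsilon\in\OK^\times$ of norm $1$ with $\sigma_1(\epsilon)>1$, so $\sigma_2(\epsilon)=\sigma_1(\epsilon)^{-1}\in(0,1)$. For any totally positive $\nu\in\OK$ the element $\mu_n:=\epsilon^{2n}\nu$ is totally positive with $\sigma_2(\mu_n)=\sigma_1(\epsilon)^{-2n}\sigma_2(\nu)\to 0$, so $\sigma_2(\mu_n)<\delta$ once $n$ is large; then $q(w)=\mu_n$ for some $w\in\OK^{|A|}$, and $q(\epsilon^{-n}w)=\epsilon^{-2n}\mu_n=\nu$ with $\epsilon^{-n}w\in\OK^{|A|}$. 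Hence $q$ is a universal quadratic subform of $G$ (in particular $A\neq\varnothing$, since otherwise the preceding paragraph would be contradictory — reflecting that a purely generalized totally positive definite form takes values only in a fixed cone bounded away from the walls of the positive quadrant). The step I expect to demand the most care is the normalization and the verification that $q$ is a quadratic subform in the paper's precise sense: one must check that conjugating and deleting variables are legitimate operations and that the "only unconjugated" variables are genuinely decoupled afterwards. Everything else is a short computation, with the total positive-definiteness of $Q$ — really, of $\sigma_2(Q)$ — entering exactly at the displayed inequality, which is also precisely the point where the hypothesis cannot be dropped, in line with the counterexample announced in the abstract.
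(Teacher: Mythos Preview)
Your proof is correct and follows essentially the same route as the paper's: set the proper variables to zero to obtain the candidate quadratic subform, use positive-definiteness of the associated quadratic form to bound $G$ from below by a multiple of $\Tr(z_i^2)$ whenever a proper variable $z_i$ is nonzero, and then use multiplication by squared units to reduce representing an arbitrary totally positive element to representing one that is small in the relevant embedding. The only cosmetic difference is that you bound $\sigma_2(G(z))$ via the eigenvalues of $\sigma_2(Q)$, whereas the paper bounds $\sigma_1(G(z))$ using a rational $\delta$ with $Q\succeq\delta\sum x_j^2$; these are interchangeable, and your $\lambda_0$ is exactly the paper's observation that $\Tr(z_i^2)\geq 1$ for nonzero $z_i\in\OK$.
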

With \Cref{def:subform} giving the exact meaning of \uv{subform} for this theorem, we will prove it in \Cref{sec:main} as \Cref{thm:main}.
Very informally speaking, this means that $G$ cannot be universal in any new, interesting way. Crucially though, this notion of positive-definiteness for $G$ allows for it to be universal without containing a universal quadratic subform when the associated quadratic form of $G$ is only totally positive semidefinite, as we will illustrate in \Cref{sec:counterexample}.

Our proof of \Cref{thm:main} is specific to quadratic fields, leaving open the question of whether an analogous theorem holds in higher-degree fields as well. We speculate in \Cref{rmrk:higher-degree} that over some totally real Galois extension of $\kve$ of degree $d\geq 3$, a generalized form which would contain more than $1$ but not all $d$ of the conjugates of some variable $z$ might manage to be universal, thus yielding a counterexample to \Cref{thm:main} in higher-degree fields.

\section*{Acknowledgments}
This research project was accomplished at the Student Number Theory Seminar at Charles University. We thank V\'{i}t\v{e}zslav Kala for organizing the seminar and providing helpful feedback.

\section{Preliminaries}

Let $K=\kve(\sqrt D)$ be a real quadratic field and let $\OK$ be its ring of integers. We denote by $\tau$ the unique non-trivial automorphism
\begin{align*}
    \tau:K&\to K,\\
    a+b\sqrt D &\mapsto a-b\sqrt D.
\end{align*}
Subsequently, we denote the \emph{norm} $\nm(\alpha):=\alpha\tau(\alpha)$ and \emph{trace} $\tr(\alpha):=\alpha+\tau(\alpha)$. We say an $\alpha\in K$ is \emph{totally positive} if $\alpha>0$ and $\tau(\alpha)>0$; we then denote this by $\alpha\succ 0$. More generally, we say that $\alpha\succ\beta$ if $\alpha-\beta\succ0$, and analogously, we define the relation $\succeq$. The subsets of totally positive elements of $K$ and $\OK$ are $K^+$ and $\OK^+$ respectively. Further, we denote by $\OK^\times$ the set of units (invertible elements) of $\OK$.

By a \emph{quadratic form} over $K$ in $r$ variables, we mean an $r$-variable homogeneous quadratic polynomial
\[
    Q(x_1,\dots,x_r) = \sum_{1\leq i\leq j\leq r} a_{ij}x_ix_j
\]
with $a_{ij}\in K$. We say $Q$ is \emph{integral} if all $a_{ij}$ lie in $\OK$. We say $Q$ is \emph{totally positive definite} (or \emph{totally positive semidefinite}), if $Q(x_1,\dots,x_r)\succ 0$ (or $Q(x_1,\dots,x_r)\succeq0$) for all $(x_1,\dots,x_r)\in K^r\setminus\{(0,\dots,0)\}$. An integral $Q$ is said to \emph{represent} $\alpha\in\OK$, if there exist $\beta_1,\dots,\beta_r\in\OK$ such that $Q(\beta_1,\dots,\beta_r)=\alpha$; further, a totally positive semidefinite integral $Q$ is said to be \emph{universal} if it represents all elements of $\OK^+$.

Next, let us define generalized quadratic forms over $K$. Informally, these arise by allowing $\tau(z)$ (which in general is not a polynomial in $z$) to be used alongside every variable $z$ in a quadratic form. We will understand a \emph{generalized quadratic form} (or \emph{generalized form} for short) in $r$ variables over $K$ to be an expression of the form
\[
G(z_1,\dots,z_r) = G_0(z_1,\tau(z_1),\dots,z_r,\tau(z_r))
\]
where $G_0$ is a quadratic form in $2r$ variables over $K$. We will further call $G$ \emph{integral} if $G_0$ is integral. Let us say that an integral $G$ represents an element $\alpha\in\OK$ if there exist $\beta_1,\dots,\beta_r\in\OK$ such that $G(\beta_1,\dots,\beta_r)=\alpha$.

Although we could always refer to $G$ using the quadratic form $G_0$, this may introduce some redundancy when $z_i$ (or $\tau(z_i)$) does not appear in $G$, i.e. when there is no term in $G_0$ that would, after substituting $z_1,\tau(z_1),\dots,z_r,\tau(z_r)$, contain $z_i$ (or $\tau(z_i)$). This motivates the following:

\begin{definition}
    Let us say that $z$ is a \emph{proper variable} of $G$ if both $z$ and $\tau(z)$ appear in $G$. Further, let us say $G$ is \emph{quadratic} if it has no proper variables.
\end{definition}
When a generalized form is quadratic, each variable $z$ appears either only as $z$ or only as $\tau(z)$. Since $\tau$ restricts to a bijection on $\OK$, this does not change which elements are represented, which is what motivates us to disregard these \uv{cosmetic} applications of $\tau$ and treat a generalized form with no proper variables as essentially a quadratic form.

\begin{definition}\label{def:associated-quadform}
    Let $G(z_1,\dots,z_r)$ be a generalized form in $r$ variables and suppose without loss of generality that $z_1,\dots z_{\ell}$ are not proper variables while $z_{\ell+1},\dots,z_r$ are proper. We then define \emph{the associated quadratic} form of $G$ to be the quadratic form in $k=\ell+2(r-\ell)=2r-\ell$ variables satisfying
    \[
        G(z_1,\dots,z_k)=Q(z_1', \dots, z_{\ell}', z_{\ell+1}, \tau(z_{\ell+1}),\dots,z_k,\tau(z_k)), 
    \]
    where each $z_j'$ is either $z_j$ or $\tau(z_j)$ for $j \leq \ell$ depending on the form in which it appears in $G$.

    Let us say that $G$ is a \emph{totally positive definite} (or \emph{totally positive semidefinite)} generalized quadratic form if its associated $Q$ is a totally positive definite (or totally positive semidefinite) quadratic form.
\end{definition}
Informally speaking, $Q$ arises from $G$ by treating $z_i$ and $\tau(z_i)$ as wholly separate variables when both appear in $G$ and disregarding those $z_i$ or $\tau(z_i)$ which do not appear.

We will be interested mainly totally positive definite generalized forms $G$. Note that total positive-definiteness is \emph{not} equivalent to $G(z_1,\dots,z_r)\succ0$ for $(z_1,\dots,z_r)\in K^r\setminus\{(0,\dots,0)\}$ (see \Cref{sec:counterexample}). For a totally positive semidefinite generalized form $G$, let us say that $G$ is \emph{universal} if it represents all elements of $\OK^+$.

\begin{definition}
\label{def:subform}
Let us say that a generalized form $H(z_{1},\dots,z_{k})$ is a \emph{subform} of a generalized form $G(z_1,\dots,z_r)$ if $H$ arises from $G$ by setting some of the variables to zero, i.e.
\[
    H(z_1,\dots,z_k) = G(z_1,\dots,z_k, 0,\dots,0)
\]
after some reordering of the variables of $G$.
\end{definition}

\section{Finding a universal quadratic subform}
\label{sec:main}

\begin{lemma}
\label{lem:large-associate}
    Let $\alpha \in \mathcal{O}_K^+ $. For every constant $\delta>0$ there exists a unit $\epsilon\in\OK^\times$ such that $\beta:=\epsilon^2\alpha$ satisfies $\beta<\delta$.
\end{lemma}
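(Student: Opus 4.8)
The plan is to exploit the structure of the unit group of a real quadratic field, namely the fact that $\OK^\times$ contains an element of infinite order. By Dirichlet's unit theorem (equivalently, by the solvability of the Pell equation), there is a fundamental unit $\epsilon_0 \in \OK^\times$, which we may take to satisfy $\epsilon_0 > 1$ in the fixed real embedding of $K$. Since $\nm(\epsilon_0) = \epsilon_0 \tau(\epsilon_0) = \pm 1$, we obtain $\lvert\tau(\epsilon_0)\rvert = \epsilon_0^{-1} < 1$. This is the single observation that does any work: a unit that is large in one real embedding is automatically small in the other, so powers of $\epsilon_0^{-1}$ shrink to $0$ while staying units.

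Given $\alpha \in \OK^+$ and $\delta > 0$, I would then set $\epsilon := \epsilon_0^{-n} \in \OK^\times$ for a positive integer $n$ to be determined, so that $\beta := \epsilon^2 \alpha = \epsilon_0^{-2n}\alpha$. Since $\alpha > 0$ and $\epsilon_0^{-2n} \to 0$ as $n \to \infty$, the real number $\beta$ tends to $0$, so any sufficiently large $n$ gives $\beta < \delta$, which is exactly what is claimed. One may also note in passing that $\beta \succ 0$ automatically, as $\epsilon^2 \succ 0$ and $\alpha \succ 0$, although only the single inequality $\beta < \delta$ is required here.

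I do not expect any genuine obstacle: beyond elementary estimates, the only input is the existence of a unit of infinite order in $\OK$, which is standard for real quadratic fields. If one prefers not to invoke a fundamental unit explicitly, it suffices to pick any unit $u \in \OK^\times$ with $u \neq \pm 1$; since $u$ and $\tau(u)$ are real with $u\tau(u) = \nm(u) = \pm 1$, we have $\lvert u\rvert \neq 1$, and after possibly replacing $u$ by $u^{-1}$ we may assume $\lvert u\rvert > 1$, so the argument above goes through verbatim with $\epsilon := u^{-n}$.
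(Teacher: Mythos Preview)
Your proof is correct and follows essentially the same approach as the paper: pick a unit $\epsilon_0>1$, set $\epsilon=\epsilon_0^{-n}$, and let $n\to\infty$ so that $\beta=\epsilon_0^{-2n}\alpha\to 0$. The additional remarks about $\tau(\epsilon_0)$, the positivity $\beta\succ 0$, and the alternative choice of unit are all fine but not needed for the argument.
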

\begin{proof}
Let $\varepsilon_0$ be a unit in $\OK$ satisfying $\epsilon_0>1$.
Letting $\epsilon=\epsilon_0^{-n}$ and choosing increasingly larger positive integers $n$, we observe that $\beta=\epsilon_0^{-2n}\alpha$ tends toward $0$ from above. Hence, for sufficiently large $n$, the desired inequality will be achieved.
\end{proof}

The following lemma has been proven in diverse formulations and generalizations on various occasions (e.g. \cite[Lemma 5.2]{cassels}, \cite[Lemma 4]{kala-yatsyna}, \cite[Proposition 5.1]{prakash}), but we redo a simple proof for the benefit of the reader here:
\begin{lemma}
\label{lem:quadform-lowerbound}
Let $Q(x_1,\dots,x_n)$ be a totally positive definite quadratic form in $n$ variables over $K$. Then there exists a rational $\delta=\delta(K,Q)>0$ such that
\[
    Q(x_1,\dots,x_n) \succeq \delta(x_1^2+\cdots+x_n^2).
\]
\end{lemma}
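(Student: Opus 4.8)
The plan is to split the total-positivity relation $\succeq$ into its two components coming from the two real embeddings of $K$, and to reduce the claim to the classical fact that a positive definite real quadratic form dominates a positive multiple of $x_1^2 + \cdots + x_n^2$. Write $Q(x) = x^{T} M x$, where $M$ is the symmetric Gram matrix of $Q$, whose entries lie in $K \subset \mathbb{R}$, and let $\tau(Q)$ denote the real quadratic form obtained by applying $\tau$ to each entry of $M$. Since the constant $\delta$ we will produce is rational, hence fixed by $\tau$, the desired inequality $Q(x) \succeq \delta(x_1^2 + \cdots + x_n^2)$ is precisely the conjunction of the two \emph{real} inequalities $Q(x_1, \dots, x_n) \ge \delta \sum_i x_i^2$ and $\tau(Q)(\tau(x_1), \dots, \tau(x_n)) \ge \delta \sum_i \tau(x_i)^2$, to hold for all $(x_1, \dots, x_n) \in K^n$.

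First I would verify that $Q$ and $\tau(Q)$, regarded as real quadratic forms on $\mathbb{R}^n$, are positive definite. Each of them is strictly positive on $K^n \setminus \{0\}$: for $Q$ this is exactly total positive definiteness, and for $\tau(Q)$ it follows because $\tau$ is a bijection of $K$ and $\tau(Q)(\tau(w_1), \dots, \tau(w_n)) = \tau\bigl(Q(w_1, \dots, w_n)\bigr)$, which is positive whenever $Q(w) \succ 0$. Since $K^n$ is dense in $\mathbb{R}^n$ and both forms are continuous, they are positive semidefinite on all of $\mathbb{R}^n$. To exclude a nontrivial radical, note that $\det M \ne 0$, for otherwise $M w = 0$ for some $w \in K^n \setminus \{0\}$, giving $Q(w) = w^{T} M w = 0$ and contradicting $Q(w) \succ 0$; consequently the Gram matrices $M$ and $\tau(M)$ are invertible, and a positive semidefinite form with invertible Gram matrix is positive definite.

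Finally, a positive definite real quadratic form $R$ on $\mathbb{R}^n$ attains a minimum value $\lambda > 0$ on the compact unit sphere $\{\, y \in \mathbb{R}^n : \|y\| = 1 \,\}$, and by homogeneity $R(y) \ge \lambda \|y\|^2$ for all $y \in \mathbb{R}^n$. Applying this to $R = Q$ and to $R = \tau(Q)$ yields constants $\lambda_1, \lambda_2 > 0$; I then pick any rational $\delta$ with $0 < \delta \le \min(\lambda_1, \lambda_2)$, which exists since the minimum is positive, and evaluate at $y = (x_1, \dots, x_n)$ and at $y = (\tau(x_1), \dots, \tau(x_n))$ respectively. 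This gives precisely the two real inequalities recorded above, and hence $Q(x) \succeq \delta(x_1^2 + \cdots + x_n^2)$.

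The argument is essentially folklore, so I do not anticipate a genuine obstacle; the only point demanding care is the passage from \uv{positive on a dense subset} to \uv{positive definite on $\mathbb{R}^n$}, which is why the computation $\det M \ne 0$ is carried out explicitly rather than skipped.
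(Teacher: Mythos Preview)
Your argument is correct and follows the same overall strategy as the paper: pass to the two real embeddings, establish that the relevant real quadratic forms are positive definite, and then take the minimum over the unit sphere (the paper phrases this via the smallest eigenvalue of the Gram matrix, which is the same quantity). Two points where your write-up is actually more careful than the paper's: first, you justify real positive-definiteness explicitly via density of $K^n$ in $\mathbb{R}^n$ together with $\det M\neq 0$, whereas the paper simply asserts the eigenvalues are positive; second, you correctly treat $Q$ and $\tau(Q)$ as two separate real forms and take $\delta\le\min(\lambda_1,\lambda_2)$. The paper instead writes $\tau(Q(x_1,\dots,x_n))=Q(\tau(x_1),\dots,\tau(x_n))$, an identity that only holds when $Q$ has rational coefficients---your formulation $\tau(Q(w))=\tau(Q)(\tau(w))$ is the right one and avoids this slip.
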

\begin{proof}
Let $Q$ be given by a symmetric matrix $A$. Being a symmetric real matrix, $A$ is orthogonally diagonalizable with all its eigenvalues real.
Further, since $Q$ is positive definite, these eigenvalues $\lambda_1\leq\cdots\leq\lambda_n$ are all positive. Letting now $(y_1,\dots,y_n)$ be the coordinates of $(x_1,\dots,x_n)$ in the corresponding orthonormal basis, we obtain
\[
Q(x_1,\dots,x_n) = \lambda_1y_1^2+\cdots+\lambda_ny_n^2 \geq \lambda_1(y_1^2+\cdots+y_n^2) = \lambda_1(x_1^2+\cdots+x_n^2).
\]
If we now choose $\delta$ as some rational number from the interval $(0,\lambda_1]$, we then obtain $Q(x_1,\dots,x_n)\geq\delta(x_1^2+\cdots+x_n^2)$ and consequently also
\[
\tau(Q(x_1,\dots,x_n)) = Q(\tau(x_1),\dots,\tau(x_n)) \geq \delta(\tau(x_1)^2+\cdots+\tau(x_n)^2) = \tau(\delta(x_1^2+\cdots+x_n^2)),
\]
hence $Q(x_1,\dots,x_n)\succeq \delta(x_1^2+\cdots+x_n^2)$.
\end{proof}

\begin{lemma}
\label{lem:genform-lowerbound}
Let $ G(z_1, z_2, \dots, z_r) $ be a totally positive definite generalized  form in $ r $ variables over $ K $. Then there exists a constant $\delta>0$ with the following property: whenever $(z_1,\dots,z_r)\in\OK^+$ such that some proper variable $z_i$ takes a non-zero value, it holds that $G(z_1,\dots,z_r)\geq \delta$.
\end{lemma}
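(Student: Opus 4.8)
The plan is to reduce everything to \Cref{lem:quadform-lowerbound} applied to the associated quadratic form of $G$, and then to extract the final numerical bound from integrality via the norm.

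First I would set up notation following \Cref{def:associated-quadform}: reorder so that $z_{\ell+1},\dots,z_r$ are precisely the proper variables of $G$, and write
\[
    G(z_1,\dots,z_r) = Q\bigl(z_1',\dots,z_\ell',\, z_{\ell+1},\tau(z_{\ell+1}),\dots,z_r,\tau(z_r)\bigr),
\]
where $Q$ is the associated quadratic form in $k = 2r-\ell$ variables and each $z_j'$ is $z_j$ or $\tau(z_j)$. Since $G$ is totally positive definite, $Q$ is totally positive definite by definition, so \Cref{lem:quadform-lowerbound} supplies a rational $\delta_0>0$ with $Q(x_1,\dots,x_k)\succeq\delta_0(x_1^2+\cdots+x_k^2)$ for all $(x_1,\dots,x_k)\in K^k$.

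Now take $z_1,\dots,z_r\in\OK$ and suppose some proper variable $z_i$ (so $i>\ell$) satisfies $z_i\neq0$. The key point — and really the only place the distinction between $G$ and its underlying $G_0$ matters — is that because $z_i$ is \emph{proper}, both $z_i$ and $\tau(z_i)$ genuinely occur among the $k$ coordinates fed into $Q$ (they are not among the discarded variables). Substituting the tuple $\bigl(z_1',\dots,z_\ell',z_{\ell+1},\tau(z_{\ell+1}),\dots,z_r,\tau(z_r)\bigr)$ into the displayed inequality and keeping only the nonnegative terms $z_i^2$ and $\tau(z_i)^2$ on the right-hand side, we obtain in the standard embedding
\[
    G(z_1,\dots,z_r) \;\geq\; \delta_0\bigl(z_i^2+\tau(z_i)^2\bigr).
\]
Finally, since $z_i$ is a nonzero algebraic integer, $\Nm(z_i)=z_i\tau(z_i)$ is a nonzero rational integer, hence $|z_i\tau(z_i)|\geq1$; by AM--GM, $z_i^2+\tau(z_i)^2\geq 2|z_i\tau(z_i)|\geq 2$. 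Therefore $G(z_1,\dots,z_r)\geq 2\delta_0$, and we may take $\delta=2\delta_0$, which depends only on $K$ and $G$.

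I do not expect a real obstacle here beyond the bookkeeping of \Cref{def:associated-quadform}; the one subtlety worth stating carefully is exactly why the hypothesis must be phrased in terms of a proper variable, namely that only then are $z_i$ and $\tau(z_i)$ both present as honest coordinates of $Q$, so that the positive-definite lower bound actually "sees" the contribution $z_i^2+\tau(z_i)^2$.
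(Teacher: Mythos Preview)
Your proof is correct and follows essentially the same route as the paper: apply \Cref{lem:quadform-lowerbound} to the associated quadratic form, use properness of $z_i$ to ensure both $z_i^2$ and $\tau(z_i)^2$ appear in the sum of squares, and then invoke an integrality bound. The only cosmetic difference is in the last step, where the paper observes that $z_i^2+\tau(z_i)^2=\Tr(z_i^2)\geq 1$ (a positive rational integer), while you use $|\Nm(z_i)|\geq 1$ together with AM--GM to get the slightly sharper $z_i^2+\tau(z_i)^2\geq 2$; either way the argument is the same.
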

\begin{proof}
For notational simplicity, let us write the associated quadratic form $Q$ as $Q(x_1,\dots,x_n)$, disregarding the correspondence between these $x_j$'s and the various $z_i$ and $\tau(z_i)$. By \Cref{lem:quadform-lowerbound}, we have 
$Q(x_1,\dots,x_n)\geq \delta(x_1^2+\cdots+x_n^2)$ for a certain $\delta$. Now, letting $z_i$ be a proper variable that takes a non-zero value, we see that $z_i^2+\tau(z_i)^2$ appears in $x_1^2+\cdots+x_n^2$. Since all the other terms in this sum of squares are non-negative, we then bound
\[
    G(z_1,\dots,z_r) \geq \delta(z_i^2+\tau(z_i)^2) = \delta\Tr(z_i^2).
\]
Since $z_i^2\in\OK^+$, we obtain $\Tr(z_i)\geq1$. Thus we arrive at $G(z_1,\dots,z_n)\geq \delta$.
\end{proof}

\begin{theorem}
\label{thm:main}
    Let $ G(z_1, z_2, \dots, z_r) $ be a totally positive definite integral generalized form in $r$ variables over a real quadratic field $K$. If $G$ is universal, then $G$ has a universal quadratic subform.
\end{theorem}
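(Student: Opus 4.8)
The plan is to let $H$ be the subform of $G$ obtained by setting every proper variable to zero, and to show that this $H$ is a universal quadratic subform. After reordering so that $z_1,\dots,z_\ell$ are the non-proper variables of $G$ and $z_{\ell+1},\dots,z_r$ the proper ones, put $H(z_1,\dots,z_\ell):=G(z_1,\dots,z_\ell,0,\dots,0)$. First I would record the structural facts: $H$ is integral since its coefficients are among those of $G$; setting variables to zero cannot make a non-proper variable proper, so $H$ has no proper variables and is therefore quadratic; and the associated quadratic form $Q_H$ of $H$ is exactly the specialization of the associated quadratic form $Q$ of $G$ obtained by setting the coordinates belonging to the proper $z_i$ to zero, hence it is still totally positive definite. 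Thus $H$ is a totally positive definite integral quadratic subform of $G$, and everything reduces to proving that $H$ is universal.

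For universality, fix an arbitrary $\alpha\in\OK^+$ and aim to represent it by $H$. Let $\delta>0$ be the constant furnished by \Cref{lem:genform-lowerbound}, so that any assignment of elements of $\OK$ to $z_1,\dots,z_r$ in which some proper variable is nonzero forces $G\ge\delta$. By \Cref{lem:large-associate} applied to $\alpha$ and this $\delta$, there is a unit $\epsilon\in\OK^\times$ with $\beta:=\epsilon^2\alpha<\delta$; since $\epsilon^2\succ0$ and $\alpha\succ0$ we have $\beta\in\OK^+$. As $G$ is universal it represents $\beta$, say $G(\gamma_1,\dots,\gamma_r)=\beta$ with all $\gamma_i\in\OK$. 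Were some proper $\gamma_i$ nonzero, \Cref{lem:genform-lowerbound} would give $\beta=G(\gamma_1,\dots,\gamma_r)\ge\delta$, contradicting $\beta<\delta$; hence $\gamma_{\ell+1}=\dots=\gamma_r=0$ and $\beta=H(\gamma_1,\dots,\gamma_\ell)$. So $H$ represents $\beta=\epsilon^2\alpha$.

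It remains to pass from $\epsilon^2\alpha$ down to $\alpha$, and this is exactly where being quadratic is used. Since $H$ has no proper variables it represents precisely the same elements of $\OK$ as its associated quadratic form $Q_H$ (replacing a variable $z_j$ by $\tau(z_j)$ only permutes the set of $\OK$-tuples, as $\tau$ is a bijection of $\OK$), and $Q_H$ is an honest quadratic form, so it is homogeneous of degree $2$. From $Q_H(\mu_1,\dots,\mu_\ell)=\epsilon^2\alpha$ with $\mu_j\in\OK$ we obtain $Q_H(\epsilon^{-1}\mu_1,\dots,\epsilon^{-1}\mu_\ell)=\epsilon^{-2}(\epsilon^2\alpha)=\alpha$, with $\epsilon^{-1}\mu_j\in\OK$ because $\epsilon^{-1}\in\OK$. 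Hence $Q_H$, and therefore $H$, represents $\alpha$; as $\alpha\in\OK^+$ was arbitrary, $H$ is universal.

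I do not expect a genuine obstacle, only one point that must be handled with care, and it is precisely the point that makes the theorem non-trivial: the reduction from representing all sufficiently small totally positive elements to representing all totally positive elements works because the surviving subform $H$ is quadratic and hence homogeneous; for a subform that still contained a proper variable this descent would fail. The remaining item worth a sentence is the degenerate case $\ell=0$, in which $G$ has only proper variables: here the argument above still runs and forces $G$ to represent the totally positive element $\beta$ as $G(0,\dots,0)=0$, which is impossible, so a universal totally positive definite $G$ automatically has a non-proper variable and no separate case is needed.
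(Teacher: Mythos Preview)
Your proof is correct and follows essentially the same approach as the paper: define the quadratic subform by zeroing out the proper variables, use \Cref{lem:genform-lowerbound} together with \Cref{lem:large-associate} to force any representation of a sufficiently small associate $\epsilon^2\alpha$ to avoid the proper variables, and then descend to $\alpha$ via homogeneity. You are somewhat more explicit than the paper in verifying that $H$ is integral and totally positive definite, in spelling out the homogeneity step, and in noting the degenerate case $\ell=0$, but these are elaborations of the same argument rather than a different route.
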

\begin{proof}
    Assume without loss of generality that each of $z_1, \dots, z_\ell$ is not a proper variable of $G$ while $z_{\ell+1}, \dots, z_r$ are proper. Then $\tilde Q:=G(z_1,\dots,z_\ell,0,\dots,0)$ is a quadratic subform of $G$ and we will show it is universal.
    
    For this, let an arbitrary $\alpha\in\OK^+$ be given. Let us consider the constant $\delta$ from \Cref{lem:genform-lowerbound} corresponding to $G$. By \Cref{lem:large-associate}, we may choose a unit $\epsilon$ such that $\beta:=\epsilon^2\alpha$ satisfies $\beta<\delta$.

    This $\beta$ is still totally positive, so $G$ represents it, let us say $\beta = G(z_1,\dots,z_r)$ for some $z_1,\dots,z_r\in\OK$. Because of the choice of $\delta$ form \Cref{lem:genform-lowerbound}, we see that all $z_{\ell+1},\dots,z_r$ must be zero. Hence $\beta$ is in fact represented by $G(z_1,\dots,z_\ell,0,\dots,0)$, i.e. by $\tilde Q$. The property of being represented by a quadratic form is preserved when multiplying by the square of a unit, so since $\tilde Q$ represents $\beta$, it also represents $\alpha$. Thus $\tilde Q$ is universal.
\end{proof}

\begin{remark}
\label{rmrk:higher-degree}
Let us comment on why this proof only works for quadratic fields. The notion of a generalized quadratic form $G$ may easily be formulated over Galois number fields $K$ of degree $d$ as homogeneous quadratic polynomials in $\tau_j(z_i)$, where $i=1,\dots,r$ indexes the variables and $j=1,\dots,d$ indexes the automorphisms $\tau_j:K\to K$. The adjacent notion of associated quadratic form would also easily generalize: we would treat each $\tau_j(z_i)$ as a separate variable and forget those that do not appear.

The issue would arise in \Cref{lem:genform-lowerbound}: essentially, our proof in the quadratic case says that, using the lower bound by a sum of squares, once both $z$ and $\tau(z)$ are present, we bound the symmetric expression $z^2+\tau(z)^2$ from below. Thus we get a dichotomy: either both conjugates of the variable appear and we bound from below, or only one conjugate appears, contributing to a quadratic subform. This dichotomy breaks for $d\geq3$ -- for example, in the cubic case, a very small (potentially arbitrarily small) element could be represented using a variable $z$ for which $\tau_1(z)$ and $\tau_2(z)$ are present in $G$, but $\tau_3(z)$ is not. Hence we would not have a symmetric expression in $z$ to bound from below, but we would still be left with two forms of $z$, thus not getting representation by a quadratic subform.
\end{remark}

\section{A counterexample in the semidefinite case}
\label{sec:counterexample}

The definition of total positive-definiteness of a generalized form $G$ may seem somewhat unnatural, because the condition on the associated quadratic form $Q$ does not easily translate to a condition on $G$ itself. In contrast, asking that $G$ (or equivalently $Q$) be totally positive semidefinite is equivalent to $G$ attaining only totally non-negative values: if
\[
    G(z_1,\dots,z_r) = G_0(z_1,\tau(z_1),\dots,z_r,\tau(z_r))
\]
for some quadratic form $G_0$ and we denote $\sigma(K):=\{(z,\tau(z))\mid z\in K\}$, the values of $G$ on $K^r$ are the values of $G_0$ on the set $\sigma(K)^r$.
This set is dense in $\er^{2r}$, hence if $G_0$ is non-negative on $\sigma(K)^r$, it is non-negative on the whole of $\er^{2r}$. Density of $\sigma(K)^r$ is essentially the argument of \cite[Lemma 3.2]{stunts} justifying that $G$ determines $G_0$ uniquely.

With total positivity instead of total non-negativity, this correspondence breaks: Even if $G$ attains only totally positive values outside of the origin, $G_0$ (nor $Q$) need not be totally positive definite. This is because $\sigma(K)^r$ may miss some non-trivial points in $K^{2r}$ with zero values. E.g. the generalized form $G(z)=(2z-\tau(z))^2$ attains only totally positive values away from the origin, because $2z-\tau(z)=0$ only for $z=0$, but $G_0(x,y)=Q(x,y)=(2x-y)^2$ is only totally positive semidefinite, not definite.

We will now illustrate that the stronger condition of total positive definiteness of $G$ is needed in \Cref{thm:main} and cannot be replaced by total positive semidefiniteness.

\begin{example}
    Let us construct a totally positive semidefinite generalized form $G$ that is universal but has no universal quadratic subform. This will show that total positive-definiteness cannot be weakened to semidefiniteness in \Cref{thm:main}.

    Let $K:=\Q(\sqrt2)$, then $\O_K = \Z[\sqrt2]$. It is well-known \cite{cohn} that the quadratic form
    \[
        S := x_1^2+x_2^2+x_3^2+x_4^2
    \]
    represents all totally positive elements of $\Z[2\sqrt2]$, i.e. those totally positive $a+b\sqrt2$ with even $b$. It is further known (e.g. by applying \cite[Theorem 3]{dress-scharlau}) that
    the only indecomposable elements of $\O_K^+$, up to multiplying by squares of units, are $1$ and $\beta:=2+\sqrt2$.

    Denoting now $\varepsilon:=3+2\sqrt2$ (this is the square of the fundamental unit $1+\sqrt2$) and labeling
    \[
        \ell(z) := 2z-\tau(z),\hskip4em g(z) := \ell(z)^2 = (2z-\tau(z))^2,
    \]
    we claim that 
    \[
        H := g(z_1) + \varepsilon g(z_2) + \varepsilon^2 g(z_3) + \varepsilon^3 g(z_4) + \beta g(z_5) + \varepsilon\beta g(z_6) + \varepsilon^2\beta g(z_7) + \varepsilon^3\beta g(z_8)
    \]
    makes $G: = S+H$ into a universal generalized form.

    First, notice that $\ell(a+b\sqrt2) = a+3b\sqrt2$, so an element of $\O_K$ is expressible by $\ell$, if and only if its irrational part is divisible by $3$. All totally positive units in $\OK$ are powers of $\varepsilon$, and in $\OK/3\OK$ we may calculate
    \[
        \varepsilon^{2n} \equiv (2\sqrt2)^{2n} \equiv 8^n\equiv \pm 1\pmod 3,
    \]
    hence $\varepsilon^{2n}$ is expressible by $\ell$. This in turn means $\varepsilon^{4n}$ is represented by $g$. Thus, since every indecomposable element in $\O_K^+$ is of the form $\varepsilon^m$ or $\varepsilon^m\beta$ and $g$ can represent every power of $\varepsilon^4$, it follows that $H$ represents every indecomposable element in $\O_K^+$.

    Now we prove that $G$ is universal. Let $\alpha\in\O_K^+$ be arbitrary. If it has an even irrational part, it is represented by $S$, hence by $G$ as well. Thus we may presume that $\alpha$ has an odd irrational part. Every element of $\O_K^+$ is a sum of indecomposables, so let us write
    \[
        \alpha = \eta_1+\cdots+\eta_s
    \]
    for some indecomposables $\eta_1,\dots,\eta_s$. One of the $\eta_i$'s has to have an odd irrational part, since otherwise the irrational part of $\alpha$ would be even; without loss of generality, let it be $\eta_1$. Then $\alpha-\eta_1$ is either $0$ or totally positive, because it is a sum of non-negatively many totally positive elements, and simultaneously it has an even irrational part. Thus $S$ represents $\alpha-\eta_1$. By the discussion above, $H$ represents $\eta_1$, so altogether $G=S+H$ represents $\alpha = (\alpha-\eta_1)+\eta_1$. Hence $G$ is universal.

    Now, $G$ only takes totally non-negative values, because it is just some sum of squares of elements of $\O_K$ weighted by totally positive elements, hence it is totally positive semidefinite. But any quadratic subform of $G$ would have to be a subform of $S$, since all the variables from $H$ are proper. But (a subform of) a sum of squares can only represent elements of $\Z[2\sqrt2]$, meaning it cannot be universal.
    
    Thus $G$ is totally positive semidefinite but does not have a universal quadratic subform.
\end{example}

\def\arxiv#1{\href{https://arxiv.org/abs/#1}{arXiv:#1}}

\end{document}